\theoremstyle{plain}
\newtheorem{theorem}{Theorem}[section]
\newtheorem{lemma}[theorem]{Lemma}
\theoremstyle{definition}
\newtheorem{ex}{Example}
\newtheorem{qu}{Question}
\newtheorem{co}{Comment}
\newtheorem{defn}{Definition}
\newtheorem{notation}{Notation}
\newcommand\dref[1]{Definition~\ref{defn:#1}}
\newcommand\tref[1]{Theorem~\ref{thm:#1}}
\newcommand\lref[1]{Lemma~\ref{lem:#1}}
\newcommand\cref[1]{Corollary~\ref{cor:#1}}
\newcommand\coref[1]{Comment~\ref{co:#1}}
\newcommand\qref[1]{Question~\ref{qu:#1}}
\newcommand\eref[1]{Equation~\eqref{eg:#1}}
\newcommand\exref[1]{Example~\ref{ex:#1}}
\begin{document}
\newcommand\sall{$\sigma$-algebra}
\newcommand\sal{$\sigma$-algebra~}
\newcommand\sals{$\sigma$-algebras~}
\newcommand\ttt{$T,T^{-1}$~transformation~}
\newcommand\tttl{$T,T^{-1}$~transformation}
\newcommand\ttp{$T,T^{-1}$~process~}
\newcommand\ttpl{$T,T^{-1}$~process}
\newcommand\Om{$\Omega$~}
\newcommand\Oml{$\Omega$}
\newcommand\om{$\omega$~}
\newcommand\oml{$\omega$}
\newcommand\omo{$\omega_1$~}
\newcommand\omol{$\omega_1$}
\newcommand\omt{$\omega_2$~}
\newcommand\omtl{$\omega_2$}
\newcommand\omb{$\Omega$~}
\newcommand\ombl{$\Omega$}
\newcommand\proa{$...a_{-2}, a_{-1}, a_{0}, a_{1}, a_{2},...$~}
\newcommand\prob{$...b_{-2}, b_{-1}, b_{0}, b_{1}, b_{2},...$~}
\newcommand\proc{$...c_{-2}, c_{-1}, c_{0}, c_{1}, c_{2},...$~}
\newcommand\proX{$...X_{-2}, X_{-1}, X_{0}, X_{1}, X_{2},...$~}
\newcommand\proan{$a_{0}, a_{1}, a_{2},...$~}
\newcommand\probn{$b_{0}, b_{1}, b_{2},...$~}
\newcommand\procn{$c_{0}, c_{1}, c_{2},...$~}
\newcommand\procX{$X_{0}, X_{1}, X_{2},...$~}
\newcommand\proA{$...A_{-2}, A_{-1}, A_{0}, A_{1}, A_{2},...$~}
\newcommand\proB{$...B_{-2}, B_{-1}, B_{0}, B_{1}, B_{2},...$~}
\newcommand\proC{$...C_{-2}, C_{-1}, C_{0}, C_{1}, C_{2},...$~}
\newcommand\proAn{$A_{0}, A_{1}, A_{2},...$~}
\newcommand\proBn{$B_{0}, B_{1}, B_{2},...$~}
\newcommand\proCn{$C_{0}, C_{1}, C_{2},...$~}
\title{Nonintersecting splitting \sals in a non-bernoulli transformation}
\author{Steven Kalikow
\thanks{University of Memphis, Department of
Mathematics, 3725 Norriswood, Memphis, TN 38152, USA.}
}

\maketitle

{\bf This paper is dedicated to the memory of Dan Rudolph}

\begin{abstract}
Given a measure preserving transformation $T$ on a Lebesgue \sall, a complete $T$ invariant sub \sal is said to split if there is another complete $T$ invariant sub \sal on which $T$ is Bernoulli which is completely independent of the given sub \sal and such that the two sub \sals together generate the entire \sall.  It is easily shown that two splitting sub \sals with nothing in common imply $T$ to be K. Here it is shown that $T$ does not have to be Bernoulli by exhibiting two such nonintersecting \sals for the \tttl , negatively answering a question posed by Thouvenot in 1975. \end{abstract}
\section{Introduction}
\begin{notation}\label{not:1}
Throughout, any transformation $T$ that we consider is an ergodic transformation of a Lebesgue space \Om endowed with \sal $A$ and measure $\mu$. $T,\Omega,A$ and $\mu$ are assumed unless we say otherwise.
\end{notation}
\begin{defn}
\label{defn:pin}
The Pinsker algebra is the largest  $T$ invariant sub \sal $S$ of $A$ such that $T$ has entropy 0 on $S$.
\end{defn}
\begin{defn}
\label{defn:K}
$T$ is called a $K$ transformation if its Pinsker algebra is trivial (i.e. consists only of the sets whose measure is either 0 or 1.) This is equivalent to saying that $T$ has no nontrivial $0$ entropy factors (The trivial $0$ entropy factor is the unique transformation $T'$ which acts on the unique Lebesgue space consisting of only one point.)
\end{defn}
\begin{defn}
\label{defn:split}
A $T$ invariant sub \sal $B$ of $A$ is said to split if there is another $T$ invariant sub \sal $C$ of $A$ such that $T$ on $C$ is Bernoulli, $B$ is independent of $C$ and together $B$ and $C$ generate $A$.
\end{defn}
If a sub-\sal splits then it contains the Pinsker algebra. Hence if two sub-\sals both split, their intersection must contain the Pinsker algebra and thus if that intersection is trivial the process must be $K$. In \cite{OS} it was shown that there are uncountably many transformations that are $K$ and not Bernoulli. In \cite{ttt} it was shown that a particular transformation called the \ttt was one of them. 

As a result of the second sentence of the above paragraph, Thouvenot posed the following question

\begin{qu}\label{qu:ber}If there are two complete $T$ invariant sub-\sals of a given \sal 
which both split but whose intersection is trivial, does it follow that the transformation is Bernoulli?
\end{qu}

Actually this is a weakening of the question he asked, namely Question A in \cite{A} posed in 1975, namely

\vskip.5cm

\noindent {\bf Question A}. If there are two complete $T$ invariant sub-\sals $A_1$ and $A_2$ of a given \sal $A$ such that both $A_1$ and $A_2$ split in $A$, does it follow that the intersection of $A_1$ and $A_2$ splits in $A_1$?

\vskip.5cm

The reason a positive answer to \qref{ber} follows from a positive answer to Question A is that if the trivial \sal splits in $A_1$ and $A_1$ splits in $A$ then $T$ on $A_1$ is Bernoulli and hence $T$ on $A$ is Bernoulli. Thouvenot mentioned Question A again in \cite{again} where he proved that

\vskip.5cm

\noindent If $T_1$ has the weak Pinsker property (it is not necessary for the reader to know what that means in this paper) $T_B$ is Bernoulli of finite entropy and $T_1 \times T_B$ is isomorphic to $T_2 \times T_B$, then $T_1$ is isomorphic to $T_2$.

\vskip.5cm

The reason he still wanted to know the resolution of Question A was that the above result would be rendered easy for any $T_1$ and $T_2$ if $T_1 \times T_B$ had finite entropy and the statement of Question A were valid. 
\begin{proof}
From the preconditions it would follow that $T_1$ has the same entropy as $T_2$. Let $\Omega_1,\Omega_2,\Omega_3,\Omega_4$, and $\Omega_5$ respectively be the spaces that $T_1$ acts on, $T_2$ acts on, $T_B$ acts on, $\Omega_1 \times \Omega_3$, and $\Omega_2 \times \Omega_3$. If we look at the isomorphism from $T_1 \times T_B$ to $T_2 \times T_B$ (from $\Omega_4$ to $ \Omega_5$)
let $A_2$ be the embedding of the \sal generated by $T_2$ in $\Omega_2$ into $\Omega_5$.
Let $A_1$ be the isomorphic image of [the inbedding of \sal generated by $T_1$ into $\Omega_4$] into $\Omega_5$. 
Let $T$ be $T_2 \times T_B$ (which acts on $\Omega_5$).
Let $A$ be the entire \sal generated by $T$ in $\Omega_5$.
We have that $A_1,A_2$,and $A$ are all on $\Omega_5$ and under $T$, both $A_1$ and $A_2$ split in $A$ and hence a positive result to Question A would give that their intersection splits in both $A_1$ and $A_2$ under $T$. The restriction of $T$ to $A_1$ gives a transformation isomorphic to $T_1$ which can be written as the cross of $T$ restricted to that intersection with a Bernoulli and the restriction of $T$ to $A_2$ gives a transformation isomorphic to $T_2$ which can be written as the cross of $T$ restricted to that intersection with another Bernoulli. Since $T_1$ has the same entropy as $T_2$ the two Bernoullis have the same entropy and hence are isomorphic making $T_1$ isomorphic to $T_2$.
\end{proof}

The purpose of this paper is to answer \qref{ber} negatively (thereby answering Question A negatively) by showing that we can do this with the \tttl. But it would be unfair to give this paper complete credit for answering \qref{ber}. The two $T$ invariant sub-\sals we will give will be called $B$ and $C$ to be defined later. The creation of $B$ and the fact that it split was established by Thouvenot. He never actually wrote this in a paper but he made it general knowledge. The creation of $C$ and the fact that it split was established by Hoffman in \cite{hoff}. The reason why \qref{ber} has until now been too hard to solve is that Hoffman's result was not yet known. After Hoffman established that $C$ splits it was clear that if someone could show that $B$ and $C$ had trivial intersection \qref{ber} would be resolved. Hoffman himself never had the opportunity of resolving this issue because he was not aware of the question. Our contribution here is to establish that $B$ and $C$ has trivial intersection, thereby completing the resolution of \qref{ber} negatively. Actually the \sal $B$ that we use here is not precisely the \sal that Thouvenot proved to split but $B$ is just a small modification of that \sal and the proof that it splits is identical. Here we will state the modified Thouvenot \sal $B$, show that $B$  and $C$ have trivial intersection, and then give the proof (essentially Thouvenot's) that $B$ splits. 

Let me be more precise. Thouvenot starts with a set to make his construction. He insists that the set be a stretch of the path (we will later explain what path means). However we will show that he could have let it be a stretch of both the path and the scenery (we will later explain what scenery means). We use exactly the Thouvenot construction to get $B$ except that we use a set that depends on more than just the path. For that reason we include in the last section a proof (essentially Thouvenot's) that $B$ splits. 

Thus we are faced with the following tasks in this paper. 
\\1)	Define the \ttt (Although this has already been done in \cite{ttt} we do it again here to make this paper self contained.)
\\2)	Define $B$.
\\3)	Define $C$. 
\\4)	Show that $B$ and $C$ have trivial intersection.
\\5)	Use Thouvenot's proof to show that $B$ splits.

\section{Acknowledgements} I am grateful to Jean Paul Thouvenot for introducing me to the problem, mentioning his and Hoffman's \sall s to me and indicating to me that I could solve the problem if I could prove their intersection to be trivial. This is just one of the many examples of the enormous amount of support I have received from him in the past 15 years. During those years he took it upon himself to call me about twice a week without my asking him to do this and since I live alone this has been enormously supportive. I would also like to thank the referee whose critisms caused this to be a much clearer paper than it would have been otherwise. 

\section{Dedication to Dan Rudolph}

Everyone understands that Dan was the best ergodic theorist of our generation in the world but that is not the purpose of this dedication. The purpose of this dedication is to indicate the kind of person Dan was as a person. While we shared a house together in Berkeley with about three other people he did most the work of organizing and supporting that house without seeming to in any way resent doing that. When I got sick and was afraid I would not be able to handle a job, he and Ken Berg encouraged me to come to Maryland anyway and with Ken took on my work load for a semester when I was hospitalized. While he was chairman of the department and a father and a thesis advisor he still found time to sit in on my ergodic theory course as a service to me (he obviously did not need to learn ergodic theory from me). One of my criteria for judging a person is whether he tends to say bad things about others. Dan never did. He was always positive in his impressions of others always looking for the good in others.

\section{ General Theory}
~~~~~Since this paper makes heavy use of the concept of fibers and since we fear that many readers are uncomfortable about fiber arguments, we wish to give a discussion about fiber arguments and why they are valid. We will usually omit proofs of theorems in this section as we are only discussing theory that the reader should already be familiar with. When we do give proofs in this section they will be only sparse proofs as the reader should be able to fill in the details himself. As throughout this paper all transformations $T$ are on a Lebesgue space and $T$ acts on a \sal $A$.

In this paper all transformations considered have finite entropy and all such transformations have a finite generator.

\begin{defn} \label{defn:qtname} Let $T$ be a transformation, $Q$ be a partition and \om be a point in the space. The $Q,T$ name of \om is the sequence \proa such that $a_i$ is the element of $Q$ which $T^i$(\oml) is in.
\end{defn} 

\begin{notation}\label{not:A1A2} $A_1$ and $A_2$ are complete sub-\sall s of $A$.
\end{notation}

\begin{notation}\label{not:symdif} $\Delta$ means symmetric difference.
\end{notation}

\begin{defn} \label{defn:lit} The literal \sal generated by $Q,T$ is the smallest \sal containing all sets in the partitions $\bigvee\limits_{-n}^{n}(T^i(Q))$ for all $n$.  The \sal generated by $Q,T$ is all sets of the form $S_1 \Delta Z$ where $S_1$ is in the literal \sal generated by $Q,T$ and $Z$ has measure $0$.  \end{defn} 

\begin{theorem}\label{samename} Let $T_1$ and $T_2$ be two measure preserving transformations acting on the same space and let $Q_1$ and $Q_2$ be two partitions. Suppose the \sal generated by $Q_1, T_1$ and $Q_2, T_2$ both equal $A_1$. Then there exists a set $Z$ of measure $0$ such that if \omo and \omt are not in $Z$ then they have the same $Q_1, T_1$ name iff they have the same $Q_2, T_2$ name.
\end{theorem}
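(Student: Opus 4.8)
The plan is to reduce the statement to a bookkeeping fact about the atoms of the two generated \sals, exploiting the fact, built into \dref{lit}, that the \sal generated by $Q_i,T_i$ is the \emph{literal} \sal enlarged by null sets only. First I would rephrase ``same name'' in terms of the generating sets. Enumerate the countable family $\{F_n\}$ consisting of all the translates $T_1^{\,j}(q)$ with $q\in Q_1$ and $j\in\mathbb{Z}$; these generate the literal \sal of $Q_1,T_1$. By \dref{qtname}, two points $\omega_1$ and $\omega_2$ have the same $Q_1,T_1$ name precisely when, for every $n$, both or neither lies in $F_n$, since $T_1^{\,i}\omega\in q \iff \omega\in T_1^{-i}(q)$. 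Call this relation $\sim_1$, and define $\{G_m\}$ and $\sim_2$ analogously from $Q_2,T_2$. Because $T$ has a finite generator the partitions are finite, so both families are countable, which is what keeps the null sets below from accumulating to positive measure.

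The key observation is that the collection of sets which are unions of $\sim_1$-classes is itself a \sal, and it contains every $F_n$ by the very definition of $\sim_1$; hence it contains the entire literal \sal generated by $Q_1,T_1$. Equivalently, every set in that literal \sal is $\sim_1$-invariant and cannot separate two points sharing a $Q_1,T_1$ name. Now, since the \sal generated by $Q_2,T_2$ is also $A_1$, each generator $G_m$ lies in $A_1$, so by \dref{lit} I may write $G_m=S_m\,\Delta\,Z_m$ with $S_m$ in the literal \sal of $Q_1,T_1$ and $\mu(Z_m)=0$. Symmetrically, each $F_n=S'_n\,\Delta\,Z'_n$ with $S'_n$ in the literal \sal of $Q_2,T_2$ and $\mu(Z'_n)=0$.

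I would then set $Z=\bigl(\bigcup_m Z_m\bigr)\cup\bigl(\bigcup_n Z'_n\bigr)$, a countable union of null sets and hence null. For the forward direction, suppose $\omega_1,\omega_2\notin Z$ and $\omega_1\sim_1\omega_2$. For each $m$, invariance of $S_m$ gives $\omega_1\in S_m\iff\omega_2\in S_m$, and since neither point lies in $Z_m$ this upgrades to $\omega_1\in G_m\iff\omega_2\in G_m$. As this holds for all $m$, we get $\omega_1\sim_2\omega_2$, i.e. the two points share the same $Q_2,T_2$ name. The reverse implication is identical with the roles of the two systems exchanged, using $S'_n$ and $Z'_n$. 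This yields the desired equivalence off $Z$.

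The step I expect to require the most care is not the algebra but the uniformity of the null set: the equivalence is asserted for \emph{all} pairs of points both avoiding $Z$, so one must confirm that a single null set works simultaneously rather than choosing an exceptional set pair by pair. This is exactly why I fold every $Z_m$ and $Z'_n$ into $Z$ at once, and why the finiteness of $Q_1,Q_2$ (hence countability of $\{F_n\},\{G_m\}$) is essential to keep $Z$ null. The only conceptual ingredient underneath is the observation that unions of $\sim_i$-classes form a \sal containing the generators, which is immediate from the definition of the name.
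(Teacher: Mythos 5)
Your proof is correct and follows essentially the same route as the paper's: both arguments reduce to the facts that every set in $A_1$ differs from a set in the literal \sal of $Q_1,T_1$ by a null set, that literal sets cannot separate two points with the same name, and that a single null set can be fixed by taking a countable union over the generating sets of the other system. Your version is somewhat more explicit about where the exceptional set comes from (invoking \dref{lit} directly rather than the paper's approximation step), and your only overstatement is that \emph{finiteness} of the partitions is essential --- countability suffices, as \coref{infinite} notes.
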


\begin{proof}Let $S \in A_1$. You can approximate $S$ arbitrarily well with a finite union sets in the partition $\bigvee\limits_{-n}^{n}(T_1^i(Q_1))$  for some $n$. It follows that after removing a set of measure $0$, the $T_1,Q_1$ name of \omo determines whether or not you are in $S$. By letting $S$ be a set in $\bigvee\limits_{-n}^{n}(T_2^i(Q_2))$, we get that after removing  a set of  measure 0 the $T_1,Q_1$ name of  \omt determines the $T_2,Q_2$ name of \omt from -n to n and hence (after removing set of measure $0$) you can get it to determine the entire $T_2,Q_2$ name of \omt. Argue symmetrically to get the converse
\end{proof}.

\begin{co}\label{co:infinite}As a matter of fact there is no reason to restrict to finite entropy because the above theorem is still true if one or both of those partitions are countably infinite. In the proof you still need to get an increasing sequence of finite partitions which generate $A_1$ but this can be done by truncating your countable generator to make it finite at any finite stage. 
\end{co}  

The above theorem makes the following definition well defined up to measure $0$, i.e. it does not depend on what transformation $T$ we use or what generator $Q$ we use. 

\begin{defn} \label{defn:fiber} Let $Q,T$ generate $A_1$. Then \omo and \omt are said to be in the same fiber of $A$ over $A_1$ if they have the same $Q,T$ name. Being in the same fiber of $A$ over $A_1$ is an equivalence relation and a fiber of $A$ over $A_1$ is an equivalence class for that equivalence relation. We can just say fiber of $A_1$ or same fiber of $A_1$ if $A$ is understood.
\end{defn}

\begin{theorem}\label{bnlit} Let $S$ be a set in the literal \sal generated by $Q,T$. Then if \omo and \omt have the same $Q,T$ name, either they are both in $S$ or neither of them is in $S$.
\end{theorem}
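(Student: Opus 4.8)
The plan is to apply the standard generation (``good sets'') principle for \sall s. Let $\mathcal{G}$ be the collection of all subsets $S \subseteq \Omega$ that are saturated for the ``same $Q,T$ name'' equivalence relation of \dref{fiber}; that is, whenever $\omega_1$ and $\omega_2$ have the same $Q,T$ name, either both lie in $S$ or neither does. The theorem asserts precisely that the literal \sal generated by $Q,T$ is a subcollection of $\mathcal{G}$, so it suffices to establish two things: (i) $\mathcal{G}$ is a \sall, and (ii) $\mathcal{G}$ contains every set belonging to each partition $\bigvee\limits_{-n}^{n}(T^i(Q))$. Since by \dref{lit} the literal \sal is the \emph{smallest} \sal containing those generating sets, (i) and (ii) together force the literal \sal to lie inside $\mathcal{G}$, which is the desired conclusion.

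Step (i) is routine and I would merely record it: the subsets of $\Omega$ saturated by any equivalence relation are closed under complementation (a set and its complement treat each equivalence class identically) and under countable --- indeed arbitrary --- unions (if $\omega_1$ and $\omega_2$ share a name and $\omega_1$ lies in some member $S_m$ of a union, then $\omega_2 \in S_m$ as well), and they contain $\Omega$; hence they form a \sall. Step (ii) carries the actual content. Here the observation is that the atom of $\bigvee\limits_{-n}^{n}(T^i(Q))$ containing a point $\omega$ is completely determined by the symbols $a_{-n}, \ldots, a_{n}$ of its $Q,T$ name, these being exactly the records of which cell of $Q$ each of the iterates $T^{-n}(\omega), \ldots, T^{n}(\omega)$ occupies. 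Consequently two points with the same $Q,T$ name lie in the same atom of every such refining partition, so each atom --- and therefore each set belonging to the partition --- is a union of whole name-equivalence classes and thus lies in $\mathcal{G}$.

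The main obstacle, such as it is, is confined to the bookkeeping in step (ii): one must keep straight that membership of $\omega$ in a cell of the refined partition over the window $[-n,n]$ depends on exactly the name symbols in that window and on nothing else; beyond this there is no real difficulty. Finally I would note the contrast with Theorem~\ref{samename}: there an exceptional null set $Z$ was unavoidable, whereas here the conclusion holds for \emph{every} pair of points with equal names. This is precisely because the literal \sal is assembled from genuine atoms of the partitions $\bigvee\limits_{-n}^{n}(T^i(Q))$ with no measure-zero modifications allowed, so no sets can be altered off a null set to break the name-saturation property.
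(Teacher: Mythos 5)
Your proposal is correct and follows exactly the route the paper sketches: the paper's one-line proof says to verify the claim for all sets in the partitions $\bigvee\limits_{-n}^{n}(T^i(Q))$ and to check that the collection of sets satisfying the conclusion is a \sall, which is precisely your good-sets argument spelled out in detail. No discrepancy.
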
 

\begin{proof}  Just show that it is true for all sets in the appropriate partitions and that the collection of sets for which it is true is a  \sall. 
\end{proof}

From this it follows that 

\begin{theorem}\label{thm:bn} Let $S$ be a set in $A_1$. There exists a set $Z$ of measure $0$ dependent on what set $S$ we use and on what transformation and generator we used to define fibers, such that if \omo and \omt are in same fiber of  $A$ over $A_1$ and neither \omo nor \omt are in $Z$, then either they are both in $S$ or neither of them is.
\end{theorem}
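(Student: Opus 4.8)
The plan is to reduce the statement to Theorem~\ref{bnlit} by peeling off a single null set. Recall that by \dref{lit} the \sal generated by $Q,T$, namely $A_1$, consists precisely of the sets of the form $S_1 \Delta Z_0$ where $S_1$ lies in the literal \sal generated by $Q,T$ and $\mu(Z_0)=0$. Since the given set $S$ belongs to $A_1$, I would begin by fixing one such representation $S = S_1 \Delta Z_0$, and then declare the null set promised in the statement to be this $Z = Z_0$. Note that $Z_0$ is determined by $S$ and by the generating pair $Q,T$ used to build the literal \sal, which matches exactly the dependence asserted in the theorem.

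With this decomposition in hand, the key input is Theorem~\ref{bnlit}: because $S_1$ lies in the literal \sal generated by $Q,T$, any two points with the same $Q,T$ name are either both in $S_1$ or both outside $S_1$. Equivalently, membership in $S_1$ is constant along every fiber of $A$ over $A_1$ (recall from \dref{fiber} that a fiber is exactly a maximal set of points sharing a common $Q,T$ name). No exceptional set is needed for $S_1$ itself; the only discrepancy between $S$ and $S_1$ is confined to $Z_0$.

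The conclusion then follows directly. Suppose \omo and \omt lie in the same fiber of $A$ over $A_1$ and that neither belongs to $Z$. Since each avoids $Z_0$, membership of each in $S = S_1 \Delta Z_0$ agrees with its membership in $S_1$. By Theorem~\ref{bnlit} the two points have identical membership in $S_1$, hence identical membership in $S$, which is precisely the desired statement.

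The one point requiring care is consistency of the generator, and this is where I expect whatever mild friction there is to lie. Fibers are defined in \dref{fiber} relative to some fixed generating pair, and by Theorem~\ref{samename} (extended as in \coref{infinite}) the fiber relation is independent of that choice only up to a null set. I would therefore perform the decomposition $S = S_1 \Delta Z_0$ using the very same $Q,T$ that defines the fibers in question; were a different generator used to produce the representation of $S$, I would simply enlarge $Z$ by the corresponding exceptional null set supplied by Theorem~\ref{samename}. Beyond this bookkeeping there is no substantive obstacle, since the entire content of the theorem is the passage from the literal \sal to its measure-theoretic completion, and that passage is absorbed entirely into the symmetric-difference null set $Z_0$.
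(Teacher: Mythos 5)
Your proof is correct and is exactly the argument the paper intends: the paper derives \tref{bn} from \tref{bnlit} with only the remark ``From this it follows that,'' and the implicit step is precisely your decomposition $S = S_1 \Delta Z_0$ with $S_1$ in the literal \sal and $Z_0$ null, plus the observation that membership in $S_1$ is constant on fibers. Your extra care about which generator is used (enlarging $Z$ via \tref{samename} if necessary) is a sensible bit of bookkeeping consistent with the theorem's stated dependence of $Z$ on the transformation and generator.
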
 

\begin{defn} \label{defn:2point} We say $A_2$ is a two point extension of $A_1$ if there is a set $Z$ of measure $0$ such that off of $Z$, each fiber of $A$ over $A_1$ is a union of two fibers of $A$ over $A_2$.
\end{defn}

\section{Definitions of the \tttl, $B$ and $C$}

\begin{defn} \label{defn:tt-1} 
Let \proa (called the scenery) be independent tosses of a fair coin and let
\prob (called the path) also be independent tosses of a fair coin. The path and scenery are chosen independently of each other. The $a_i$ take on the values $h$ or $t$ and the $b_i$ each take on the values $L$ or $R$ but the distribution of both processes are the same ($1/2,1/2$ product measure). $h$ connotes heads $t$ connotes tails $L$ connotes left and $R$ connotes right. The \ttt is a stationary process on a four letter alphabet $\{(h,L),(t,L),(h,R),(t,R)\}$. 
To generate a word in the \ttt we take a random walk on \proa using the \prob to tell you how to walk. 
\end{defn}

\begin{ex}\label{ex:tti}Suppose the terms $b_{-2},b_{-1},b_{0},b_{1},b_{2}$ take on the values $L,L,L,L,R$ and 
$a_{-2},a_{-1},a_{0},a_{1},a_{2}$ take on the values $h,h,t,t,t$. Here is how we start to generate a word
\proc in the \tttl . Since $b_0 = L$ and $a_0 = t, c_0=(t,L)$. 
Now since $b_0 = L$ it means we walk to the left on the scenery. On the path the 0 coordinate always goes to the right (i.e. the sequence \prob always shifts to the left). Since $b_1=L$ and $a_{-1}=h,~c_1 = (L,h)$. Now since the first coordinate of $c_{1}$ is $L$, we walk to the left again on the scenery. Since $b_2 = R$ and $a_{-2}= h,~c_2 =(h,R)$. We also go backwards in time to get $c_{-1},c_{-2}~$ etc. Since $b_{-1} =L$ we just walked left and since we start at 0 we must have been at position 1 at time -1 and so since $a_1 = t$ and $b_{-1}=L, ~c_{-1}=(L,t)$.
\end{ex}
\begin{defn}\label{defn:sp} In the above definition, \proa is called the scenery and \prob is called the path. 
\proX is the \ttp where each $X(i)$ takes on the value 
\\ $c_i \in P:=\{(h,L),(h,R),(t,L),(t,R)\}$ where $c_i$ is as in \exref{tti}.
If you use the \ttp as a measure on doubly infinite words in alphabet $P$, then taking a doubly infinite word and shifting it to the right is the \ttt.
\end{defn}

\begin{co}\label{co:eas} There is an easier way to define the \ttt which explains why it is called the \tttl. Let S be the transformation on doubly infinite sequences with i.i.d. $1/2,~ 1/2$ probability (in this case the \prob process) which shifts a word to the left and $T$ be an independent transformation on doubly infinite sequences with i.i.d. $1/2,~ 1/2$  probability (in this case the \proa process) which shifts a word to the left.  Partition the space into sets; Left = ($b_0 = L$) and 
Right = ($b_0 = R$). The \ttt is the transformation on the product space which takes $(\omega_1,\omega_2)$ to $(S(\omega_1),T(\omega_2))$ if $\omega_1$ is in right and $(S(\omega_1),T^{-1}(\omega_2))$ if $\omega_1$ is in left and then the \ttp is the (\ttt, $P$) process where $P$ is as defined above, namely the four set partition determined by whether $a_0$ is heads or tails and whether you are in the right or left.
\end{co}

\begin{defn}\label{defn:fN} 
\begin{equation}
f(i)~~:=~~\begin{cases} 1~~~ \text{if}~~~ b_i = R\\
 -1~~~ \text{if}~~ b_i = L
 \end{cases} \hskip 2truecm
\end{equation}

\begin{equation} 
 n(i) := \begin{cases} \sum_{j=0}^{i-1} f(j)~~ \text{if $i$ is nonnegative}\\
  -\sum_{j=i}^{-1} f(j)~~ \text{if~~ $i$ is negative}
\end{cases}
\end{equation}

Note that $n(0)=0$.
\end{defn}

\begin{defn}\label{defn:coo}
 For each $i, X_{i}[1]$ and $X_{i}[2]$ are the first and second coordinates of $X_i$ resp.
\end{defn}
\begin{co}\label{co:rw}
 $X_{i}[1] = a_{n(i)}, X_{i}[2] = b_i.$
\end{co}
\begin{defn}\label{defn:ttp} In the \ttp , \proa is called the scenery at time 0 and \prob is called the path at time 0. $...A(-1),A(0),A(1),A(2),...$ is called the scenery at time $i$ if $A(k) = a_{n(i)+k}$ and the process $...B(-1),B(0),B(1),B(2),...$ is called the path at time $i$ if $B(k)=b_{i+k}$.\\ $...X_{-1}[1],X_{0}[1],X_{1}[1]...$ is called the scenery process which is distinct from the scenery at time $0$, but there is no distinction between the path process ($...X_{-1}[2],X_{0}[2],X_{1}[2]...$) and the path at time $0$. 
\end{defn}
\begin{co}\label{co:inttt}
INTUITIVE IDEA OF THE $TT^{-1}$ PROCESS: You should envisage the \ttp to be a random walk on a random scenery where $L$ means that you walk left, $R$ means you walk right and the heads and tails are the scenery.
\end{co}

\begin{defn} \label{defn:scesig} The \sal generated by just the knowledge of the $X_i[1]$ where $i$ runs over all the integers is called the scenery \sall .
\begin{co}\label{co:partitions}
By now the reader might be getting confused about the various partitions introduced here. Throughout the remainder of the text $P$ always means $P := ((h,L),(h,R),(t,L),(t,R)).~P$ is a refinement of two partitions. One is the path partition $(R,L)$ and the other is the scenery partition $(t,h)$. Whenever we want to refer to those two partitions we will explicitly refer to them as the $(R,L)$ partition and  the $(t,h)$ partition. If $T$ is the \tttl, the $T,P$ process is the \ttpl, the $T,(R,L)$ process is the path process and the $T,(t,h)$ process is the scenery process. The most confusing concept is the scenery at time 0 often referred to as simply the scenery. The trouble is that there is no partition $Q$ whatsoever such that the scenery at time 0 is the $T,Q$ process. You start with the scenery at time 0 to create an output of the \ttpl, but you can't get the scenery at time 0 as a standard factor of the \ttpl. \end{co}

\end{defn}

\begin{co}\label{co:omsce} If we just provide you with an \om it is clear what we mean by the scenery process for \oml . It is just the first coordinates of the $T,P$ name of \om where $T$ is the \tttl, i.e. it is the $T,(h,t)$ process. However, there is no apriori reason why you should know the scenery at time 0 for \om if all you know about \om is its $T,P$ name. But in fact, the $T,P$ name (or just the past $T,P$ name or just the future $T,P$ name) above does determine the scenery at time $0$ because random walk is recurrent so if you know the whole $T,P$ name then you can watch the random walk cover the entire scenery and thus you can observe it telling you exactly what that scenery is. Thus the following definition makes sense.
\end{co}
\begin{defn} \label{defn:omsce}The scenery at time $0$ for \om is the scenery at time $0$ determined by the $T,P$ name of \om where $T$ is the \ttt and $P = \{(h,L),(h,R),(t,L),(t,R)\}$. This is the distict from the scenery process at time $0$ which is just the first coordinates of that $T,P$ name, i.e. 

 $...X_{-2}(\omega)[1],X_{-1}(\omega)[1],X_{0}(\omega)[1],X_{1}(\omega)[1],X_{2}(\omega)[1],...$
\end{defn}
\begin{co}\label{co:nthcoor} While the scenery process and the scenery at time 0 are distinct, they both have the same $0^\text{th}$ coordinate. The $0^{\text{th}}$ coordinate for the output of the scenery process of \om at time 0 is $X_0[1]$ but here we used \om to define

 $. . .,X_{-2}, X_{-1}, X_0, X_1, X_2,. . .$ 

\noindent If we similarly took $T^{100}(\omega)$ and used that do define 

$. . .,X'_{-2}, X'_{-1}, X'_0, X'_1, X'_2,. . .$ 

\noindent  then $X'_0=X_{100}$  and hence $X'_0[1]=X_{100}[1]$. Thus the scenery process is the sequence of sequence of $0^\text{th}$ coordinates of the scenery processes of  

$...T^{-2}(\omega), T^{-1}(\omega), T^{0}(\omega), T^{1}(\omega),T^{2}(\omega),...$

\noindent and this implies (fixing \oml) that the scenery process can be thought of as the $0^\text{th}$ coordinates of the sceneries at times
... -2,-1,0,1,2... 
\end{co} 

\begin{co}\label{co:hoff} The purpose of \cite{hoff} by Hoffman was to establish that the scenery process is not loosely Bernoulli, a fact that is quite significant but irrelevant to this paper. However, in the process of proving that, Hoffman established the \sal C which we are about to describe. He starts by quoting a result of Matzinger \cite{mat} which states that if $T$ is the \tttl , there is a $Z$ of measure $0$ so that if $\omega_1 \notin Z$ and $\omega_2 \notin Z$, and \omo and \omt are in the same fiber of $A$ over the scenery sigma algebra (i.e. if $T^i(\omega_{1})$ and $T^i(\omega_{2})$ are in the same element of the two set partition $(h,t)$ for all $i \in \mathbb{Z}$) then either the scenery at time $0$ of \omo is an even translate of the scenery at time $0$ of \omt or the scenery at time $0$ of \omo is an even translate of the inverse of the scenery at time $0$ of the \omt  (i.e. if \proa is the scenery of \omo at time $0$ and $… a'_{-2}, a'_{-1}, a'_{0}, a'_{1}, a'_{2}…$ is the scenery of \omt at time $0$ then there exists an even integer $k$ such that either $a'_i = a_{i+k}$ for all $i\in \mathbb{Z}$ or $a'_i = a_{k-i}$ for all $i\in \mathbb{Z}$). Hoffman then got a two point extension of the scenery \sal such that, off a set of measure $0$, 

they are in the same fiber of $A$ over that two point extension

iff

\omo and \omt are in the same fiber of $A$ over the scenery \sal and there exists even integer $k$ such that $a'_i = a_{i+k}$ for all $i$.

He then went on to show that the two point extension he defined splits. 
\end{co} 
 
\begin{defn} \label{defn:C} $C$ is the two point extension of the scenery process that Hoffman defined above. In other words $C$ is a \sal with the following properties.\\
\\1)	There is a set $Z$ of measure 0 such that if $\omega_1 \notin Z$ and $\omega_2 \notin Z$, letting the scenery at time 0 for \omo be \proa and  the scenery at time $0$ for \omt be $… a'_{-2}, a'_{-1}, a'_{0}, a'_{1}, a'_{2}…$, \omo and \omt are in the same fiber of $A$ over $C$ iff they have the same name in the scenery process and there is an even $k$ such that $a'_i = a_{i+k}$ for all $i$.\\
\\2) $C$ splits.	\end{defn}

\begin{co}\label{co:Camb} Note that we are not giving an explicit definition of $C$ in this paper. That is because we don't need it. All we need to know about $C$ is (1) and (2) above.
\end{co}

\begin{defn}\label{defn:r1}
We put an equivalence relation, to be called equivalence relation $1$, on the set of all outputs of the \ttp by saying that \omo and \omt are equivalent1 if their outputs in the \ttp  have the same first coordinates, i.e. if they have the same name in the scenery process. 
\end{defn}
\begin{theorem}\label{thm:r1bn} Let $S$ be a set in $C$. Then there exists a set $Z$ of measure $0$ such that for any \omo and \omt if
\\1) neither \omo nor \omt are in $Z$.
\\2) \omo and \omt are equivalent1
\\3) the outputs of \omo and \omt in the \ttp only differ on finitely many coordinates.
\\
then either both \omo and \omt are in $S$ or neither of them is. 
\end{theorem}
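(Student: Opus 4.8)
The plan is to reduce the whole statement to \tref{bn}. Recall that by clause (1) of \dref{C}, two points $\omega_1,\omega_2$ (off a fixed null set) lie in the same fiber of $A$ over $C$ precisely when (i) they have the same scenery-process name and (ii) there is an \emph{even} integer $k$ with $a'_i=a_{i+k}$ for all $i$, where \proa is the scenery at time $0$ of $\omega_1$ and $a'_{-2},a'_{-1},a'_0,a'_1,a'_2,\dots$ is the scenery at time $0$ of $\omega_2$. Requirement (i) is exactly hypothesis (2) (equivalent1, see \dref{r1}), so the entire content of the argument is to manufacture the even shift $k$ out of hypotheses (2) and (3). Once $\omega_1$ and $\omega_2$ are shown to be in the same fiber of $A$ over $C$, \tref{bn} applied to the set $S\in C$ supplies a null set off which same fiber forces equal membership in $S$, and that is the conclusion.

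First I would extract from the hypotheses that the two \emph{paths} agree off a finite window. By \coref{rw} we have $X_i[1]=a_{n(i)}$ and $X_i[2]=b_i$. Hypothesis (2) says the first coordinates $X_i[1]$ agree for every $i$, while hypothesis (3) says the full outputs $X_i$ agree for all but finitely many $i$; subtracting, the path coordinates $b_i$ of $\omega_1$ and $\omega_2$ coincide for all $i$ outside some finite interval $[-M,M]$. Next I would compare the two position functions $n_1,n_2$ of \dref{fN}. Since $f(j)=\pm1$ is determined by $b_j$ and the two paths agree for $|j|>M$, the difference $n_1(i)-n_2(i)=\sum_{j=0}^{i-1}\bigl(f_1(j)-f_2(j)\bigr)$ is constant for every $i>M$, equal to $k:=\sum_{j=0}^{M}\bigl(f_1(j)-f_2(j)\bigr)$; and since each summand lies in $\{-2,0,2\}$, the integer $k$ is \emph{even}. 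This is the source of the evenness: a single flipped path-step moves the walk by exactly $\pm2$.

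Now hypothesis (2) reads $a_{n_1(i)}=a'_{n_2(i)}$ for every $i$, which for $i>M$ becomes $a_{n_2(i)+k}=a'_{n_2(i)}$. The forward trajectory $\{n_2(i):i>M\}$ is the path of a simple symmetric random walk, which off a null set is recurrent and hence visits every integer; letting $n_2(i)$ range over all of $\mathbb Z$ yields $a_{m+k}=a'_m$ for every $m$, the required even translate. Thus $\omega_1$ and $\omega_2$ lie in the same fiber of $A$ over $C$. Finally I would assemble $Z$ from the recurrence-failure set, the null set on which the scenery at time $0$ is not determined by the $T,P$ name (\coref{omsce}), the null set of clause (1) of \dref{C}, and the null set that \tref{bn} produces for this particular $S$. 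The step I expect to be the crux is the passage from ``the positions differ by the constant $k$'' to ``the sceneries at time $0$ differ by the shift $k$ on \emph{all} of $\mathbb Z$'': this is exactly where recurrence of the walk is invoked, and simultaneously where the parity of $k$ must be tracked, since $C$ records only even translates.
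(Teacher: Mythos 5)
Your proposal is correct and follows essentially the same route as the paper: both arguments reduce the claim to clause (1) of \dref{C} together with \tref{bn} by showing that the two sceneries at time $0$ are even translates of one another, the evenness coming from the parity of the $\pm 1$ walk increments, and recurrence being used to identify the full sceneries. The only cosmetic difference is that the paper anchors at a common scenery at a time $N$ in the shared past while you track the constant even offset of the two position functions into the future; both devices rest on the same facts.
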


\begin{proof}  
(3) implies that there is an $N$ such that the outputs of \omo and \omt are identical for all $i \leq N$ and that in turn implies that they have the same scenery at time $N~$(Generally one would want to think of $N$ as negative but it could be positive). Since the scenery at time $0$ is a translate of the scenery at time $N$, the sceneries at time 0 for the two of them are translates of each other. In fact they are even translates of each other because to get to the first to the scenery at 0 from the scenery process at time $N$ you have to take $|N|$ steps of size $1$ or $-1$ and to get to the second scenery at time 0 from the scenery at time $N$ you also have to take $|N|$ such steps. The result follows by (1) of \dref{C} and \tref{bn}
\end{proof}
\begin{defn}\label{defn:seen}Let $d<e$ be integers. Then $X_{d},X_{d+1}...X_{e}$ sees only a finite subset of the scenery \proa, i,e. there is a $j$ and a $k$ such that the walk from time $d$ to time $e$ only covers
$a_{j}, a_{j+1}, a_{j+2},... {a_k}.$~More precisely, j and k are the max and min of $\{n(i):d<=i<=e\}$ resp.,
where $n(i)$ is as in \dref{fN}.
\\$a_{j}, a_{j+1}, a_{j+2},... a_{k}$ is called the block of scenery seen from time $d$ to time $e$. 
\end{defn}
\begin{co}\label{co:block} "block of scenery seen from time $d$ to time $e$" is NOT the same as the scenery seen from time $d$ to time $e$ in the scenery process. The former is a concept of scenery in space and the latter is a concept of scenery in time.
\end{co}
\begin{defn}\label{defn:foba} Let $d<e$ be integers. Then $b_{d}, b_{d+1}, b_{d+2},... b_{e-1}$ is a portion of the path taking on values in $\{L,R\}$. Let $f$ be as in \dref{fN}. Then 
$fo := \max\limits_{-1\leq k\leq e-d-1}\sum\limits_{i=0}^{k} f(b_{d+i}) $ is called the forwards distance you walk from time $d$ to time $e$ and
$ba :=\min\limits_{-1\leq k \leq e-d-1}\sum\limits_{i=0}^{k} f(b_{d+i})$ is called the backwards distance you walk from time $d$ to time $e$. The net distance you walk from time $d$ to time $e$ is $\sum\limits_{i=0}^{e-d-1} f(b_{d+i})$. Here $\sum\limits_{i=0}^{-1}$(of anything) always takes on value 0.
\end{defn}

\begin{co}\label{co:fo-ba} Let $a_{j}, a_{j+1}, a_{j+2},… a_{k}$ be the block of scenery seen from time $d$ to time $e$.
Let $fo$ be the forwards distance you walk from time $d$ to time $e$.
Let $ba$ be the backwards distance you walk from time $d$ to time $e$.
Then $k-j= fo-ba+1.$
\end{co}

We now describe the \sal $B$ (a specific modification of Thouvenot's \sal).

\begin{defn}\label{defn:B}  Let S be the set of all \om whose first 11 outputs in the \ttp are 
~\\$(h,L),(h,L),(h,L),(h,R),(h,R)(h,L),(h,R),(h,L),(h,L),(h,L),(t,L)$.
~\\Let $P'$ be the partition of \Om which contains the complement of $S$ and the following partition of $S$. We let $\omega_{1}$ in $S$ and $\omega_2$ in $S$ be in the same piece of that partition iff, the following three conditions are satisfied:

\begin{align}
& \text{Let $m$, $n$ be the least positive integers such that}\label{eg:tho1} \\
& T^{-m}(\omega_1) \in S \text{~and~} T^{-n}(\omega_2) \in S. \text{~Then~} m=n.\notag \\
\notag \\
& \text{The net distance, forwards distance, and backwards distance from} \notag \\
& \text{time $-m$ until time 0 are the same for \omo and \omtl.}\label{eg:tho2} \\
\notag\\
& \text{The block of scenery seen from time $-m$ to time $0$ is the same} \notag\\
& \text{for \omo and \omt.}\label{eg:tho3}
\end{align} 
\end{defn}

\begin{defn}\label{defn:BB}
\noindent $BB$ is the literal \sal generated by $\{T^{i}P': i \in \mathbb{Z}\}$ and $B$ is the \sal generated by $\{T^{i}P': i \in \mathbb{Z}\}$, i.e. $S' \in B$ if it can be written as $SS' \Delta Z$ where $SS' \in BB$ and $Z$ has measure $0$.
\end{defn}

\begin{co}\label{co:intuitive}
INTUITIVE IDEA OF THE $B$: 
\\If you are in $S$, by \eref{tho1}, $P'$ tells you how long it has been since the previous time you were in $S$. Then by \eref{tho3}, $P'$ tells you the block of scenery you saw since the last time you  were in $S$ and by \eref{tho2}, $P'$ tells you where you were in that scenery the last time you were in $S$. Now look at the last time you were in $S$ and then $P'$ told you when you were in $S$ the previous time before that, what block of scenery you had seen between that time and the time before that and where you were in the block of scenery before that. Piecing this all together you can deduce from the entire $T,P'$ process (or just your past $T,P'$ process or just the future $T,P'$ process) your complete scenery because random walk is recurrent. If you are not in $S$ you cannot know exactly where you are in that scenery but since you did know the scenery the last time you were in $S$ and since the scenery at any time is just a translate of the scenery at any other time you know the scenery up to a translate. Furthermore you know how long it has been since the last time you were in $S$ which is an upper bound of how big that translate is. Since a fiber of the \sal generated by $T,P'$ tells you the future as well as the past you also know what the scenery will be the next time you enter $S$ and an upper bound for how much of a translate your scenery is from that scenery.
\end{co}
\begin{notation}\label{not:S} There is a certain ambiguity in our notation. Until now $S$ was a general $S$. Henceforth $S$ and $P'$ are as in \dref{B}.
\end{notation}
\begin{defn}\label{defn:r2} Let $\omega_{1}$ and $\omega_{2}$ be two elements of \Om. We say that $\omega_{1}$ and $\omega_{2}$ are equivalent2 if $T^{i}(\omega_{1})$ and $T^{i}(\omega_{2})$ are in the same atom of $P'$ for all integers $i$, i.e. if $\omega_{1}$ and $\omega_{2}$ are in the same atom of $T^{i}(P')$ for all integers $i$.
\end{defn}

Equivalent2 is equivalent to being in the same fiber of B (up to measure 0). The following is immediate from \tref{bn}

\begin{theorem}\label{thm:r2bn}Let $S_1 \in B$. Then there exists a $Z$ of measure $0$ such that if 
\\1) neither \omo nor \omt are in $Z$. 
\\2) \omo and \omt are equivalent2 . 
\\Then either both are in \omo and \omt are in $S_1$ or neither of them is.
\end{theorem}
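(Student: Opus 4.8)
The plan is to read this off as a direct instance of \tref{bn}, with the role of $A_1$ played by $B$. First I would record that, by \dref{BB}, the \sal $B$ is exactly the \sal generated by $P', T$, so $P'$ is a legitimate choice of generator with which to define fibers of $A$ over $B$ in the sense of \dref{fiber}. The partition $P'$ is countable rather than finite (its atoms inside $S$ are indexed by the return time $m$, the net, forward, and backward distances, and the block of scenery), so to be sure the fiber relation is well defined independent of the generator I would invoke \coref{infinite} rather than the finite version directly.

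Next I would identify the two equivalence relations. By \dref{r2}, \omo and \omt are equivalent2 precisely when $T^i(\omega_1)$ and $T^i(\omega_2)$ lie in the same atom of $P'$ for every $i \in \mathbb{Z}$; this is word for word the condition that \omo and \omt have the same $P', T$ name, which by \dref{fiber} is exactly the condition that they lie in the same fiber of $A$ over $B$. Thus, with the generator $P'$ fixed, being equivalent2 and being in the same fiber of $A$ over $B$ are literally the same relation, which also accounts for the parenthetical ``up to measure $0$'' remark preceding the statement, since the only ambiguity in the fiber relation is the generator-dependent measure-zero ambiguity supplied by \tref{bn}.

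With this identification the statement is immediate. Given $S_1 \in B$, I would apply \tref{bn} with $A_1 = B$ and $S = S_1$ to obtain a set $Z$ of measure $0$ such that any two points outside $Z$ lying in a common fiber of $A$ over $B$ are either both in $S_1$ or both outside it. Rephrasing the fiber hypothesis as equivalent2 then yields precisely hypotheses (1) and (2) of the theorem together with the stated dichotomy. I do not expect any genuine obstacle here: the only content is the bookkeeping that $P'$ generates $B$ and that equivalent2 coincides with the fiber relation over $B$, after which \tref{bn} supplies the measure-zero exceptional set and the conclusion.
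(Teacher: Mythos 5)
Your proposal is correct and matches the paper exactly: the paper gives no separate argument, simply declaring the theorem ``immediate from \tref{bn}'' after noting that equivalent2 is the same as being in a common fiber of $B$ (up to measure $0$). Your added bookkeeping --- that $P'$ generates $B$ and that the countability of $P'$ is handled by \coref{infinite} --- is exactly the right way to fill in what the paper leaves implicit.
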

 
\newpage
\section{Proof that $B$ and $C$ \\ have trivial intersection.}
 
\defn\label{defn:out} Henceforth $T$ is the $TT^{-1}$ transformation, 
~\\$P := \{(h,L),(h,R),(t,L),(t,R)\}$ and for \om in \Om, 
~\\
$...(a_{-2},b_{-2}),(a_{-1},b_{-1}),(a_{0},b_{0}),(a_{1},b_{1}),(a_{2},b_{2})...$, 
~\\
called the output of \om for $T$, is defined by $(a_{i},b_{i})$ is the atom of $P$ containing $T^i(\omega)$. 

\begin{co}\label{co:det}
Since $P$ is a generator of $T$, the output of \om completely determines \om
\end{co}
\begin{defn}\label{defn:ss} We suppose the existence of a set $SS$ such that  $SS \in B \cap C$.
\end{defn}
\begin{lemma}\label{lem:zz0}
 Let $Z$ be any set of measure $0$ and 
\\$ZZ = \{ \omega: \exists ~\omega_{1} \in Z$ whose output (in the \ttpl) differs from that of \om in only finitely many coordinates.\}
\\Then $ZZ$ has measure $0$.\end{lemma}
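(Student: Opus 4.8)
The plan is to work in the scenery--path representation of \dref{fN}, writing a generic point as a pair $(a,b)$ consisting of a scenery \proa and a path \prob, both distributed as $(1/2,1/2)$ product measures and independent of each other; by \coref{det} the map sending $(a,b)$ to its output for $T$ is, off a null set, a bijection, so it suffices to bound the measure of the set of $(a,b)$ obtained by modifying finitely many output coordinates of a point of $Z$. First I would isolate a small null set $W_0$, namely the set of $(a,b)$ for which the walk $n(\cdot)$ of \dref{fN} fails to visit every integer along the positive time axis. Since $n(\cdot)$ is a recurrent simple random walk (this is the recurrence invoked in \coref{omsce}), $W_0$ is null; and because two points whose outputs differ in only finitely many coordinates must have paths that differ in only finitely many coordinates, and recurrence of the walk depends only on the path up to a finite modification, membership in $W_0$ is preserved under the finite--difference relation.

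The key structural step is the following characterization. Suppose $(a,b)\notin W_0$ and $(a',b')$ has output differing from that of $(a,b)$ in only finitely many coordinates. Matching the second (path) coordinates of the two outputs shows that $b'$ differs from $b$ in only finitely many places, so the offset $n'(i)-n(i)$ (where $n'$ is the walk of the primed point) is eventually equal to a constant $k$ as $i\to+\infty$. Matching the first (scenery) coordinates at all large positive times then gives $a'_{m+k}=a_m$ for every $m$ in the range of the walk at large times; since $(a,b)\notin W_0$ that range is all of $\mathbb{Z}$, so $a'$ is exactly the translate of $a$ by $k$. Thus every finite--difference partner of a point outside $W_0$ is obtained from it by shifting the scenery by some integer and flipping finitely many path coordinates.

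Let $G$ be the group of transformations of the pair space generated by the scenery shift (and its inverse) together with the single--coordinate path flips. Each generator is a measure preserving shift or involution of the product Bernoulli measure, so every element of $G$ is measure preserving, and $G$ is countable because it has countably many generators. The characterization above says that, off $W_0$, each partner of a point of $Z$ lies in $G\cdot Z=\bigcup_{g\in G} g(Z)$, while every partner of a point of $Z\cap W_0$ again lies in $W_0$; hence $ZZ\subseteq (G\cdot Z)\cup W_0$. The right hand side is a countable union of images of the null set $Z$ under measure preserving maps, together with the null set $W_0$, so it is null, which proves $ZZ$ is null.

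I expect the main obstacle to be the characterization step, and specifically the passage from ``the outputs agree at all large positive times'' to ``$a'$ is literally a translate of $a$'': this is exactly where recurrence of the random walk is essential, since it guarantees the visited positions exhaust $\mathbb{Z}$. It is also the place where the exceptional null set $W_0$ must be isolated and shown to be invariant under the finite--difference relation, so that bounding the partners of $Z\cap W_0$ does not reintroduce the very saturation we are trying to control.
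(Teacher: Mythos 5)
Your proof is correct and follows essentially the same strategy as the paper's: cover $ZZ$ by countably many images of $Z$ under measure-preserving maps (the paper's $S_F$, which holds the output before $\min F$ fixed and flips the path on $F$, is in your scenery--path coordinates exactly a finite path-flip composed with a scenery shift, i.e.\ an element of your group $G$). Your version merely makes explicit, via the characterization step and the exceptional set $W_0$, the recurrence argument that the paper leaves implicit in asserting $ZZ\subseteq\bigcup_F S_F(Z)$.
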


\begin{proof} Let \omb be the set of paths of the \ttp with the appropriate measure on it. Fix a finite set of integers $F$. We now define a measure preserving isomorphism $S_F$ from \omb to itself. Let $Le$ be the least element of $F$. An output of the \ttp is the scenery process and the path process but in fact if you just know the output of the \ttp before a given time and just the path process from that time onward then you know the entire \ttp output because the output before a given time determines the scenery at that time and therefore the path from then onward will determine scenery process from then onward. We define a transformation $S_F$  from the space of \ttp outputs to itself by letting $S_F(\omega)$ be the unique output of the \ttp which is identical to that of \om before $Le$ and whose path is identical to that of \om from $Le$ onward except the opposite of the path of \om (i.e. switch $L$ to $R$ or $R$ to $L$.) on those times in $F$. $S_F$ can easily be seen to be measure preserving so $S_F(Z)$ has measure $0$. $ZZ \subset \bigcup\limits_{\text{all finite sets}~ F} (S_F(Z))$.\end{proof}
\begin{lemma}\label{lem:iff}There exists $ZZ$ of measure $0$ such that if 
the outputs of~\omo and \omt in the \ttp differ on only finitely many coordinates and $\omega_1 \notin ZZ$  then $\omega_1 \in SS$ iff $\omega_2 \in SS$.
\end{lemma}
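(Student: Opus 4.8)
The plan is to use \emph{both} facts $SS\in C$ and $SS\in B$ together with the recurrence of the random walk, and to show that neither the scenery side nor the path side of $SS$ can detect a finite change of the output.

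First I would put a finite output difference into a normal form. Since the value of the scenery at a given position is read off exactly when the walk stands over that position, and since the walk is recurrent (\coref{omsce}), every position is visited at arbitrarily large positive and negative times. Hence if the outputs of $\omega_1$ and $\omega_2$ agree off a finite set of times, the two points must share the \emph{same scenery in space}, and their paths can differ only on a finite set $F$ of times whose \emph{net} displacement is $0$; otherwise the two walks would never realign and infinitely many first coordinates would disagree. Consequently there is a finite window $[Le,Gr]$ outside of which the two outputs are literally identical. Next I would manufacture the null set: apply \tref{r1bn} to $SS\in C$ to obtain a null set $Z_C$, apply \tref{r2bn} to $SS\in B$ to obtain a null set $Z_B$, set $Z=Z_B\cup Z_C$, and let $ZZ$ be the set of all $\omega$ whose output differs in finitely many coordinates from that of some point of $Z$. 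By \lref{zz0} the set $ZZ$ is null, and by its very construction $ZZ$ is saturated for the finite-difference relation, so $\omega_1\notin ZZ$ automatically forces $\omega_2\notin ZZ$ whenever the two outputs differ finitely.

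The heart of the argument is to transport membership in $SS$ across the window $[Le,Gr]$. Because $SS\in C$ and $SS\in B$, \tref{bn} shows that off $Z$ the set $SS$ is a union of fibers of $C$ and also a union of fibers of $B$; equivalently, membership in $SS$ is constant along the relation of \dref{C} (same scenery process up to even translate) and along the relation equivalent2 of \dref{r2}. The plan is therefore to join $\omega_1$ to $\omega_2$ by a short chain built from these two relations. Since the two points share their scenery in space and agree outside $[Le,Gr]$, their scenery processes agree at all but finitely many times, and their $T,P'$ names agree at all but the finitely many times whose defining block overlaps $[Le,Gr]$. I would try to produce an intermediate point $\omega_3$ that has $\omega_2$'s scenery process (so that $\omega_2$ and $\omega_3$ lie in a common fiber of $C$, invoking \tref{r1bn}) while lying in the same fiber of $B$ as $\omega_1$ (so that \tref{r2bn} applies to $\omega_1,\omega_3$); realizing $\omega_3$ would use exactly the freedom to flip the path and adjust the faraway scenery exploited by the map $S_F$ of \lref{zz0}. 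Chaining the two invariances then gives $\omega_1\in SS\iff\omega_3\in SS\iff\omega_2\in SS$.

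The hard part will be precisely this last reconciliation. The raw pair $(\omega_1,\omega_2)$ satisfies \emph{neither} hypothesis directly: a finite path change alters the scenery process on the window, so $\omega_1$ and $\omega_2$ are not equivalent1, and it alters the $P'$-label at the finitely many times whose block meets $[Le,Gr]$, so they are not equivalent2. Thus the whole weight of the proof rests on absorbing these finitely many discrepancies—either by constructing the intermediate $\omega_3$ with the two required compatibilities holding \emph{simultaneously}, or by arguing directly that the finitely many altered $P'$-labels cannot separate a set that already lies in $C$. I expect the genuine obstacle to be reconciling the merely \emph{tail} agreement produced by a finite change with the \emph{everywhere} agreement demanded by the hypotheses of \tref{r1bn} and \tref{r2bn}, and I would concentrate the bulk of the work on showing that the join of the $B$-fiber relation and the $C$-fiber relation contains the finite-difference relation off $ZZ$.
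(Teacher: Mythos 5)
Your framing matches the paper's: you take $Z$ from \tref{r1bn} and \tref{r2bn}, saturate it under finite output differences, invoke \lref{zz0}, and reduce the lemma to showing that the join of the equivalent1 and equivalent2 relations contains the finite-difference relation. But the step you defer is the entire proof, and the particular intermediate point you propose cannot exist in general. You ask for a single $\omega_3$ lying in the same fiber of $B$ as $\omega_1$ while having the scenery process of $\omega_2$. Whether $T^i(\omega)\in S$ is recorded by the atom of $P'$ containing $T^i(\omega)$, so any point equivalent2 to $\omega_1$ must visit $S$ at exactly the times $\omega_1$ does; and visiting $S$ at time $i$ forces the scenery process at times $i,\dots,i+10$ to read $h,h,h,h,h,h,h,h,h,h,t$. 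If the finite path alteration taking $\omega_1$ to $\omega_2$ overlaps an occurrence of $S$ in $\omega_1$ and reroutes the walk over different scenery there --- easy to arrange while keeping the scenery in space fixed and the net displacement of the change zero --- then $\omega_2$'s scenery process on $i,\dots,i+10$ is not that word, and no $\omega_3$ with both properties exists. The symmetric choice ($\omega_3$ equivalent1 to $\omega_1$ and equivalent2 to $\omega_2$) dies the same way.

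The paper's resolution is a chain of length three, $\omega_1\sim_1\omega_3\sim_2\omega_4\sim_1\omega_2$, and this is where the peculiar shape of the word defining $S$ earns its keep: inside a large window $[N_1,N_2]$ containing the discrepancy, every occurrence of the $S$-word is replaced by a fixed rearranged word with the same first coordinates, the same set of visited scenery sites and the same endpoint, so the replacement is a legitimate equivalent1 move that destroys every visit to $S$ in the window without creating new ones (the terminal $(t,L)$ exists precisely to prevent new occurrences). The resulting $\omega_3,\omega_4$ then visit $S$ only outside the window, at common times, and choosing $N_1$, $N_2$ at a running minimum and maximum of the common walk (and away from occurrences of the $S$-word) guarantees that the return times, the forwards/backwards/net distances, and the block of scenery recorded by $P'$ at those surviving visits agree, which is what makes $\omega_3\sim_2\omega_4$. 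None of this construction is present in your write-up, and your closing sentence concedes as much; as it stands the argument supplies the bookkeeping around the lemma but not the lemma itself.
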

\begin{proof}
By \tref{r1bn} and \tref{r2bn} there exists a set $Z$ of measure $0$ such that for any two points which are equivalent1 or equivalent2, whose output in the \ttp differs on only finitely many terms, neither of which are in $Z$, either both are in $SS$ or neither of them is. Let

\vspace{.5cm}
\noindent $ZZ=\{\omega: \exists \omega_{11} \in Z$ whose output differs from \om in only finitely many coordinates$\}$ 

\vspace{.5cm}
\noindent so that by \lref{zz0} ~$ZZ$ has measure $0$. Note that for any $\omega_{11}, \omega_{12}$, if  \om  $\notin ZZ$ and $\omega_{11}$ and $\omega_{12}$ differ from \om in only finitely many coordinates, then $\omega_{11} \notin Z$ and $\omega_{12} \notin Z$. Hence the proof will be complete when we can show that for any outputs \omo and \omt of the \ttp which differ on only finitely many coordinates, there exists $\omega_3$ and $\omega_4$ such that 
~\\1) any two elements of  $\{\omega_1,\omega_{2},\omega_{3},\omega_{4}\}$ differ on only finitely many coordinates.
~\\2) $\omega_{1}$ and $\omega_{3}$ are equivalent1. 
~\\3) $\omega_{2}$ and $\omega_{4}$ are equivalent1.
~\\4) $\omega_{3}$ and $\omega_{4}$ are equivalent2.
~\\
~\\

Select $M$ such that \omo is the same as \omt outside of $[-M,M]$. Note that for any $a<-M$ and $b>M$, \omo and \omt see identically the same scenery at both $a$ and $b$ because for $a$ they see exactly the same present and past and the present and past of the \ttp determines the scenery; for $b$ they see exactly the same present and future which also determines the scenery.  Using the recurrence of random walk and the fact that the \omo walk is identical to the \omt walk before $-M$ select $N_1<-M$ so that both the \omo walk and $\omega_{2}$ walk from $N_1$ to $M$ reaches its lowest point at time $N_1$. Select $N_2>M$ so that both the \omo walk and $\omega_{2}$ walk from $N_1$ to $N_2$ reaches its highest point at time $N_2$. We also want to make sure that 
~\\
~\\5) at neither times $N_1$ nor $N_2$ we are in the middle of a stretch of the form
~\\$(h,L),(h,L),(h,L),(h,R),(h,R)(h,L),(h,R),(h,L),(h,L),(h,L),(t,L)$

\vspace{.5cm}

It is intuitively obvious that we should be able to do insist on 5) but one explicit way to do it is to realize that if we drop condition 5) there are infinitely many values $n$ we could have used for $N_1$ so with probability $1$ one such $n$ has $24$ successive $R$s in a row preceding it and then just let $N_1=n-12$. Argue symmetrically to choose $N_2$.

It follows that for any $a \leq N_1$ and any $b \geq N_2$ the backwards, forwards and net distances from $a$ to $b$ is the same for both \omo and \omtl . Alter \omo to get $\omega_{3}$ by changing each

$(h,L),(h,L),(h,L),(h,R),(h,R)(h,L),(h,R),(h,L),(h,L),(h,L),(t,L)$
~\\between $N_1$ and $N_2$ to 

$(h,L),(h,L),(h,L),(h,R),(h,L)(h,R),(h,R),(h,L),(h,L),(h,L),(t,L)$
~\\and $\omega_3$ is a legitimate output of the \ttpl. The reason we added $(t,L)$ at the end of 

$(h,L),(h,L),(h,L),(h,R),(h,R)(h,L),(h,R),(h,L),(h,L),(h,L),(t,L)$
~\\when we defined $S$ was to guarantee that no such change would create a new instance of $S$. $\omega_1$ and $\omega_{3}$ are equivalent1. Do the same to $\omega_{2}$ to get a $\omega_{4}$ such that $\omega_{2}$ is equivalent1 to $\omega_{4}$. If $a \leq N_1<N_2 \leq b$ none of these changes alter the forwards, backwards, or net distances from $a$ to $b$. $\omega_{3}$ and $\omega_{4}$ have no instance of $S$ between $N_1$ and $N_2$ so they are equivalent2 because they see $S$ at the same times, the forwards, backwards, and net distances between the previous times less than $N_1$ they saw $S$ and the next time greater than $N_2$ that they see $S$   and the sceneries they see at both those times are the same.
\end{proof} 
\begin{co}\label{co:done}In principle we are already done. \lref{iff}  establishes that $SS$ is in the double tailfield of the \ttp which is trivial so $SS$ is trivial. However we prefer to give a more self contained proof that does not refer to the double taifield of the \ttpl.
\end{co}
\begin{lemma}\label{lem:couple}Select an $N>0$ and let 
~\\$G = (a_{-N},b_{-N}),...(a_{-1},b_{-1}), (a_0,b_0), (a_1,b_1),… (a_N,b_N)$
and 
~\\$H= (a'_{-N},b'_{-N}),...(a'_{-1},b'_{-1}), (a'_0,b'_0), (a'_1,b'_1),… (a'_N,b'_N)$
~\\be two words in the \ttp from $-N$ to $N$. Let $\mu$ and $\nu$ be the conditioned measures of the \ttp conditional on $G$ and $H$ resp.  Then there is a coupling of  $\mu$ and $\nu$ (Coupling $\mu$ and $\nu$ means a measure on the product space where $\mu$ and $\nu$ are the marginals. For example, a joining is a special case of a coupling) such that with probability $1$, a pair of paths in the coupled process differs only on a finite set.
\end{lemma}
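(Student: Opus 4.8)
The plan is to build the coupling by treating the scenery and the path separately, which is legitimate because conditioning on a finite output word does not destroy the independence of scenery and path. Fixing the word $G$ on $[-N,N]$ fixes the path letters $b_{-N},\dots,b_{N}$, which in turn fixes the block $[j_1,k_1]$ of scenery seen on $[-N,N]$, and then $G$ merely prescribes the scenery values on that \emph{fixed} block. Hence under $\mu$ the scenery is an i.i.d.\ fair sequence with the finite block $[j_1,k_1]$ frozen to the values dictated by $G$, the path is an i.i.d.\ fair sequence with $[-N,N]$ frozen to the path of $G$, and the two are independent; similarly for $\nu$ with $H$ and $[j_2,k_2]$. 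So it suffices to produce a coupling of the two scenery laws together with a coupling of the two path laws, and then read off the two outputs as deterministic functions of these.

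Next I would record what any admissible coupling is forced to look like, since that tells me what to aim for. If the two outputs differ in only finitely many coordinates then the two path letters agree off a finite window, so the two walks have a constant displacement $\delta_{+}$ to the right of the window and $\delta_{-}$ to the left. Because the walk is recurrent, the positions visited after (resp.\ before) the window exhaust $\mathbb{Z}$, so agreement of the outputs forces $a^{(1)}_{m+\delta_{+}}=a^{(2)}_{m}$ and $a^{(1)}_{m+\delta_{-}}=a^{(2)}_{m}$ for every $m$; since an i.i.d.\ scenery is a.s.\ aperiodic this forces $\delta_{+}=\delta_{-}=:\delta$ and the global relation $a^{(1)}=\text{shift}_{\delta}\,a^{(2)}$. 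Thus the target is a random integer $\delta$, a shift-coupling of the two sceneries realizing $a^{(1)}=\text{shift}_{\delta}a^{(2)}$, and a coupling of the two paths whose walks acquire exactly the displacement $\delta$ on each side.

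The path half is the routine half. Given the value of $\delta$ produced by the scenery coupling, I let the first path run freely (its $\mu$-law: frozen on $[-N,N]$, i.i.d.\ outside) and steer the second. On $[-N,N]$ the two paths follow $G$ and $H$, so at time $N$ their walks already sit at some relative displacement; to the right of $N$ I run the second walk with independent fair increments until the difference walk first hits $\delta$, which happens in an a.s.\ finite time by recurrence of the (lazy) difference walk, and thereafter I feed the second path the increments of the first. This is the standard copy-after-a-stopping-time coupling, so the second path is still i.i.d.\ fair to the right of $N$ and the displacement is frozen at $\delta$; I argue symmetrically to the left, steering the same $\delta$. Because this returns a path of the correct conditional law for \emph{every} value of $\delta$, the path pair has the right marginals and sits independently of the sceneries, as the product structure of $\mu$ and $\nu$ demands.

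The scenery half is where the real difficulty lies, and it is the step I expect to be the main obstacle. I must shift-couple two sequences that are each i.i.d.\ fair away from a finite frozen block so that one is a translate of the other by an a.s.\ finite $\delta$, \emph{and} so that each marginal is exactly the prescribed ``i.i.d.\ with one frozen block'' law. The naive recipe---scan $a^{(2)}$ for the first window (disjoint from its own frozen block) on which it already displays the $G$-pattern, and shift to align there---does produce the relation $a^{(1)}=\text{shift}_{\delta}a^{(2)}$ and exists a.s.\ by recurrence of the i.i.d.\ pattern statistics, but a first-match rule biases the law of $a^{(1)}$ near its block and hence fails to reproduce the marginal exactly. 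The honest route is an \emph{unbiased} selection of $\delta$: the two scenery laws are finite modifications of one and the same Bernoulli measure, so they agree on the shift-invariant field, and for such measures an unbiased shift-coupling exists. I would therefore either invoke a shift-coupling result here or, to keep the paper's hands-on spirit, construct the unbiased selection by a mass-transport over the recurring matching windows and verify directly that the induced law of $a^{(1)}$ is the $G$-frozen i.i.d.\ law. Once $\delta$ is in hand, feeding it to the path construction and reading off the two outputs completes the coupling, the outputs agreeing off the finite window by the forced structure recorded above.
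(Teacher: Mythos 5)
Your overall architecture coincides with the paper's: factor each conditioned measure into an independent (frozen\mbox{-}block i.i.d.) scenery and (frozen\mbox{-}window i.i.d.) path, realize the two sceneries as translates of one another by a random even integer $\delta$, and then steer the second walk with independent increments until the difference walk hits $\delta$, copying increments thereafter, on each side of the window. Your ``forced structure'' observation and your path half match the paper (there $\delta$ is the even number $M+N$). The genuine divergence --- and the gap --- is at the step you yourself flag as the main obstacle: producing a shift-coupling of the two scenery laws with \emph{exact} marginals. You correctly diagnose that a naive first-match rule conditions the stretch between the two frozen blocks and so does not visibly reproduce the frozen-block i.i.d.\ marginal, but you then leave the step unexecuted: ``invoke a shift-coupling result'' or ``construct the unbiased selection by a mass-transport \dots and verify directly'' is a plan, not an argument, and since everything else in the lemma is routine, this step \emph{is} the lemma. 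The tool you gesture at does apply --- both scenery laws are the Bernoulli measure conditioned on positive-probability cylinders, hence absolutely continuous with respect to an ergodic measure, so they agree on the invariant field and the Aldous--Thorisson shift-coupling theorem produces $a^{(1)}=\mathrm{shift}_{\delta}\,a^{(2)}$ with $\delta$ a.s.\ finite --- but you must actually state and verify it, and you must apply it to the subgroup $2\mathbb{Z}$: the difference of the two walks lives on the even integers, so an odd $\delta$ would make your path half impossible, and nothing in your scenery construction (as opposed to your ``forced structure'' derivation) guarantees evenness.

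For comparison, the paper closes this same step entirely by hand: it extends the partial scenery determined by $H$ uniformly at random to a full word $w_1$ on $[-N,N]$, translates the scenery of $p$ so that the first occurrence of $w_1$ at an even center to the right of the frozen block lands at $[-N,N]$, and then argues that the resulting marginal is $\nu$ by exhibiting a mirror-image coupling (first occurrence to the left, starting from $\nu$) and claiming the two joint laws coincide, the law of the displacement being the same in both and independent of the pair of words. In other words, the paper embraces exactly the first-occurrence construction you reject and pays for it with a left/right symmetry argument in place of your appeal to general shift-coupling theory. Your route, once the shift-coupling theorem for $2\mathbb{Z}$ is explicitly invoked and the independence bookkeeping between $\delta$ and the path construction is written out, would be a legitimate and arguably more robust alternative; as written, however, the decisive step is asserted rather than proved.
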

\begin{proof}Pick a doubly infinite output, $p =...(a_{-1},b_{-1}), (a_0,b_0), (a_1,b_1),...$ in accordance with measure $\mu$. Consider the scenery $sce$ at time $0$ determined by $p$. In that scenery look at the $w$ word from $-N$ to $N$ ($w$ is the actual scenery from $-N$~ to ~$N$ of size $2N+1$, not just the scenery you get to see in $G$ which is usually on the order about $\sqrt{N}$ in size). Let $w'$ be the part of the scenery that $H$ tells you, which is typically on the order of $\sqrt{N}$ in size, part of the past scenery and part of the future scenery. Extend $w'$ randomly to a arbitrary scenery word $w_1$ from $-N$ to $N$ where we let every possible extension of $w'$ have equal probability of being the $w_1$ that we choose. Now in $sce$, let $M$ be the least integer such that $M>N, M+N$ is an even integer, and $w_1$ is the word from $M$ to $M+2N$. We now define a doubly infinite output $q$ in the \ttpl. One way to pick an output is to tell you the scenery at time $0$ and the path at time $0$ separately and then let them generate $q$. We will let the scenery of $q$ at time 0 be the scenery of $p$ at time 0 translated to the right by  $M+N$ so that the scenery at time $0$ for $q$ from $-N$ to $N$ is precisely $w_1$. We let the path from time $-N$ to time $N$ be $b'_{-N}, ... b'_{-2}, b'_{-1}, b'_0, b'_1, b'_2,... b'_N$. Since $w_1$ is an extension of $w'$ this forces the output in the \ttp for $q$ from time $-N$ to time $N$ to be precisely $H$. To get the path from time $N+1$ onward, just run it independently of the path of $p$ until 
\\ the amount of $R$s - the amount of $L$s in $p$ 
\\ exceeds 
\\the amount of $R$s - the amount of $L$s in $q$ 
\\by $M+N$. This will eventually happen with probablilty $1$ because the difference between two random walks is a recurrent random walk on the even integers. When that happens $p$ and $q$ will see the exact same scenery so from then on couple them to be exactly the same.
Using a symmetric argument run the path of $q$ at times $-(N+1),-(N+2)...$ to be independent of the path of $p$ until they see the same scenery and then let them be identical from then on.
We claim that our random selection of $(p,q)$ is the desired coupling. It is a measure on the product space of the \ttp with itself where with probability $1$, $p$ and $q$ differ on only finitely many  terms and $p$ has been chosen in accordance with $\mu$. We need only show that $q$ ends up being chosen in accordance with measure $\nu$. The proof will be complete if we can choose another coupling of $p$ and $q$ in which we first select $q$ in accordance with measure $\nu$ and then choose $p$ from $q$ in such a way that in the end the two couplings turn out to be identical. Here we get $p$ from $q$ in exactly the same way that we got $q$ from $p$ except instead of picking $M$ to be the least time bigger than $N$ with the appropriate property we let $-M$ be the largest time below $-N$ with the appropriate property and we produce the coupling in the same way(except translate forward by $M+N$ instead of backward). We think it is clear that we can get the exact same coupling that way, i.e that the measures which chose $p,q$, are identical in these two couplings.
~\\ \indent It should be intuitively obvious that these two couplings are identical but to see it more rigorusly, first note that in both cases the measure on the pair \oml,\omo are the same where \om is the $\mu$ word of length $2N+1$ and \omo is the $\nu$ word of length $2N+1$. The probability law on $M+N$ in the first coupling is identical to that of the $-(M+N)$ in the second (and furthermore is independent of the pair \oml,\omol) and for any k, the distribution of everything else(sceneries and paths) are identical given \oml, \omol, $M+N=k$ for the first coupling and $-(M+N)=k$ for the second coupling.
\end{proof}
\begin{theorem}\label{thm:BC}
$B$ and $C$ have trivial intersection.
\end{theorem}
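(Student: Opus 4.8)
The plan is to show that every $SS\in B\cap C$ has measure $0$ or $1$, which is precisely triviality of the intersection. The two lemmas just established are tailored for this. \lref{iff} says that, off a null set $ZZ$, membership in $SS$ is insensitive to altering the output of the \ttp on finitely many coordinates; and \lref{couple} says that any two conditional measures on a common central block of length $2N+1$ can be realized by a coupling whose two marginal outputs differ on only finitely many coordinates almost surely. Feeding the coupling into \lref{iff} should force all conditional probabilities of $SS$ to coincide, and a martingale argument against a generating filtration then forces $SS$ to be trivial. As noted in \coref{done}, this is the self-contained substitute for the remark that the double tailfield of the \ttp is trivial.

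Concretely, I would fix $SS\in B\cap C$ as in \dref{ss} and take $ZZ$ to be the null set supplied by \lref{iff}. Fix $N>0$ and two positive-measure words $G,H$ of the \ttp from $-N$ to $N$, with $\mu_G,\mu_H$ the associated conditional measures. By \lref{couple} there is a coupling of $\mu_G$ and $\mu_H$ under which a pair $(p,q)$ of outputs differs on only finitely many coordinates with probability $1$. Since $\mu_G$ is absolutely continuous with respect to the unconditioned measure of the \ttp and $ZZ$ is null, $\mu_G(ZZ)=0$, so under the coupling $p\notin ZZ$ almost surely; \lref{iff} then gives $p\in SS\iff q\in SS$ almost surely. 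Hence $\mu_G(SS)=\mu_H(SS)$. As $G$ and $H$ range over all admissible blocks of the same length, the conditional probability of $SS$ given the coordinates from $-N$ to $N$ is therefore a single constant, and averaging shows that constant must equal $\mu(SS)$.

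To finish, let $\mathcal{F}_N$ be the \sal generated by the output coordinates from $-N$ to $N$. By \coref{det} the partition $P$ generates, so $\mathcal{F}_N$ increases to the full \sal $A$ as $N\to\infty$. The previous step gives $E[\mathbf{1}_{SS}\mid\mathcal{F}_N]=\mu(SS)$ for every $N$, while the martingale convergence theorem gives $E[\mathbf{1}_{SS}\mid\mathcal{F}_N]\to\mathbf{1}_{SS}$ almost everywhere. Consequently $\mathbf{1}_{SS}=\mu(SS)$ a.e., so $\mu(SS)\in\{0,1\}$ and $SS$ is trivial; since $SS$ was an arbitrary element of $B\cap C$, the intersection is trivial.

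The one genuinely delicate point I expect is the transfer step $\mu_G(SS)=\mu_H(SS)$: one must verify that the null set $ZZ$ from \lref{iff} remains null for the conditional measure $\mu_G$ (this is exactly where absolute continuity of $\mu_G$ is needed), and one must apply \lref{iff} with the points oriented so that the marginal known to avoid $ZZ$ is the one the lemma hypothesizes. Once the conditional probabilities are shown constant, the passage to triviality through a generating filtration is entirely routine.
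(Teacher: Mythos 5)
Your proposal is correct and follows essentially the same route as the paper: both use \lref{iff} together with the coupling of \lref{couple} to show that the conditional measure of $SS$ given any two cylinders $G,H$ from $-N$ to $N$ must agree, and then conclude triviality. The only cosmetic difference is in the final step, where you invoke martingale convergence along the filtration generated by the coordinates while the paper argues that $SS$ is independent of every cylinder set and is approximable by cylinder sets; these are interchangeable standard finishes.
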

\begin{proof}
~\\ \lref{iff} and \lref{couple} establish that there is a $Z$ of measure $0$ such that for $\mu$ and $\nu$ as in \lref{couple} when $p,q$ is picked as in the coupling of $\mu$ and $\nu$ given by that lemma and $p\notin Z, p \in SS$ iff $q \in SS$. It follows that the $\mu$ measure of $SS$ is the same as the $\nu$ measure of $SS$ and hence that (Regarding $G$ and $H$ of \lref{couple}) the measure of $SS$ given $G$ is the same as the conditional measure of $SS$ given $H$. Since those are any two words from $-N$ to $N$ for any $N$ it follows that $SS$ is independent of any cylender set. Since $SS$ can be arbitrarily well approximated by a cylender set it follows that the measure of $SS$ is either $0$ or $1$.
\end{proof}
\newpage
\section{Proof that $B$ splits}
\begin{theorem}\label{thm:split} $B$ splits\end{theorem}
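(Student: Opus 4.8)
The plan is to exhibit an explicit Bernoulli complement $D$ to $B$ and then verify the three requirements of \dref{split}: that $T$ on $D$ is Bernoulli, that $D$ is independent of $B$, and that $B$ and $D$ together generate $A$. The guiding picture is the one in \coref{intuitive}. The $T,P'$ name records, at each visit to $S$, the elapsed time since the previous visit \eref{tho1}, the forwards, backwards and net displacement of the intervening excursion \eref{tho2}, and the block of scenery traversed \eref{tho3}; from this, using recurrence, one recovers the entire spatial scenery and one's position in it at every visit to $S$. What $B$ does \emph{not} record is the precise step-by-step path inside each excursion between consecutive visits to $S$, since many genuine $L,R$ itineraries realize the same displacement data over the same scenery block. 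Accordingly $D$ should encode exactly this residual fine structure of the path, coded so as to be fresh independent randomness: concretely I would fix a deterministic rule that, reading the $B$ data and the previously revealed residual symbols, assigns to each time $i$ a symbol $Q(i)$ which is the conditional innovation determining $b_i$, and set $D$ to be the \sal generated by $\{T^iQ:i\in\mathbb Z\}$.

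To see that $B\vee D=A$, note that by \coref{det} it suffices to reconstruct the full $T,P$ name from $B$ and $D$: the $B$ data give the scenery and the coarse excursion data, and the $D$ data, by construction, fill in the exact path $b_i$ at every time, whence $X_i[1]=a_{n(i)}$ and $X_i[2]=b_i$ are determined for all $i$. For independence $B\perp D$ I would run an argument of exactly the flavor of \lref{couple}. Since the path and scenery are chosen independently (\dref{tt-1}) and the walk is recurrent, conditioning on any finite amount of $B$ data (a scenery block together with a displacement profile over a window) should leave the conditional law of the residual itineraries unchanged, so that the $D$ name is distributed independently of $B$. Indeed the coupling of \lref{couple}, which already shows that altering an excursion's fine structure while preserving its displacement and its scenery block is measure preserving, is precisely the mechanism that forces the innovations to be independent of $B$.

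It then remains to prove that $T$ on $D$ is Bernoulli. Because visits to $S$ occur with a stationary, finite-mean gap and the residual innovations refresh from one excursion to the next, the process $(T,Q)$ is assembled from independent blocks laid down along a stationary tiling of $\mathbb Z$ by the inter-visit intervals. I would deduce the Bernoulli property either by producing an honest i.i.d. generator for $D$ directly, or by verifying that $(T,Q)$ is very weakly Bernoulli: conditioned on the innovations outside a long central window, the conditional distributions of the window ought to be uniformly $\bar d$-close, since the only coupling between the window and its complement is through the excursion data that has already been stripped off into $B$.

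The hard part will be this last step, together with the precise specification of $Q$. The inter-visit gaps are unbounded and their lengths are themselves $B$-measurable, so the quantity of residual randomness produced per unit time fluctuates and is correlated with $B$; the delicate point is to package these variable-length, $B$-entangled innovations into a single factor that is \emph{simultaneously} independent of $B$ and genuinely Bernoulli, rather than merely relatively Bernoulli over $B$. This is the technical heart of Thouvenot's original argument, and it is exactly where the careful choice of the word defining $S$ is used — in particular the appended $(t,L)$, which by the remark following \dref{B} prevents block modifications from manufacturing spurious visits to $S$ and so keeps the excursion structure clean enough for the $\bar d$ estimates to close.
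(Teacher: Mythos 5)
Your proposal takes a genuinely different route from the paper, but as written it has a gap that swallows the whole difficulty of the theorem. The paper never constructs an explicit Bernoulli complement: it verifies Thouvenot's relative very weak Bernoulli condition for the pair $(P,P')$ by exhibiting a coupling --- agree on the $P'$ names, make the past $P$ names conditionally independent, then couple the future $P$ names to coincide exactly from the first common visit to $S$ onward, using the fact that at that moment both copies know the same scenery. The machinery of \cite{vwb} and \cite{A} then produces the splitting abstractly. You instead propose to build the complement $D$ by hand out of the ``residual innovations'' of the path inside each excursion. That is a legitimate strategy in principle, but your sketch defers every step that actually carries weight: the partition $Q$ is never specified beyond ``a deterministic rule assigning the conditional innovation determining $b_i$''; the independence of $D$ from $B$ is asserted by analogy with \lref{couple} rather than proved (and \lref{couple} is about coupling conditional measures of the full process, not about innovations of the path over the $B$-factor); and the Bernoullicity of $D$ is reduced to a VWB verification you do not perform.

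The deepest problem is the one you yourself flag in your last paragraph. A system of conditional innovations refreshing excursion by excursion would naturally show that $A$ is \emph{relatively} Bernoulli over $B$, but the inter-visit gaps are unbounded, $B$-measurable, and dictate how much residual randomness is emitted per unit time, so the innovation stream is a variable-length code entangled with $B$. Repackaging it as a single stationary factor that is simultaneously i.i.d.\ and independent of $B$ is precisely the content of Thouvenot's theorem that relatively finitely determined implies splitting --- it is the theorem you are trying to prove, not a technicality to be cleaned up at the end. So your plan, carried out honestly, either reinvents the relative isomorphism machinery or quietly invokes it. The paper's approach buys exactly this: by verifying the coupling criterion one never has to exhibit $D$, and the only work is the (quite short) argument that after the first common entrance to $S$ the two futures can be glued. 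If you want to salvage your construction, the right move is to prove your $(Q,T)$ process is relatively VWB over $B$ and then cite \cite{vwb} and \cite{A} --- which collapses your argument back into the paper's.
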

\begin{proof}In \cite{vwb}, Thouvenot established a condition for determining whether or not a factor splits. The Thouvenot condition (slightly rephrased) is the following generalization of the very weak Bernoulli condition (actually it is a slightly rephrased form of the relative very weak Bernoulli condition which in \cite{vwb} was shown to imply the relatively finitely determined condition which in \cite{A} was shown to imply splitting). 

\vspace{.5cm}

$<$Thouvenot condition: Let $T$ be a transformation, let $Q_1$ be a partition which generates $T$, and let $Q_2$ be another partition. If for every $\epsilon>0$, for all sufficiently large $n$, there is a coupling of  the $((Q_1 \bigvee Q_2),T)$ process with itself so that for (\omol,\omtl) in the coupled process. 

\vspace{.5cm}

\noindent 1)	The $Q_2$ name of \omo is the same as the $Q_2$ name of \omt.
\\2)	The past $Q_1$ name of \omo conditioned on the complete $Q_2$ name is independent of the past $Q_1$ name of \omt conditioned on the complete $Q_2$ name.

\noindent 3) For $\epsilon>0$ and all sufficiently large $n$, the expected mean hamming distance between
 ~\\
 ~\\ the $Q_1$ name from time $0$ to time $n$ of \omo  
~\\
~\\and 
~\\
   ~\\ the $Q_1$ name from time $0$ to time $n$ of \omt
    ~\\
    ~\\is less than $\epsilon$.

\vspace{.5cm}
  
\noindent Then it follows that the $Q_2,T$ process splits, which means that there is another partition $B$ such that the $B,T$ process is independent of the $Q_2,T$ process, the $B,T$ process is Bernoulli, and $Q_2 \bigvee B$ generates $T>$

\vspace{.5cm}
 
Hence we only have to establish $1,2$ and $3$, when $Q_2$ is the $P'$ of  \dref{B} and $Q_1$ is the standard generator $P$ of the \ttpl, namely $P:=\{(h,L),(h,R),(t,L),(t,R)\}$.
We now start to produce the required coupling. We can just assume (1) and (2) by simply picking a $P'$ name in accordance with the measure on $P'$ names and then couple the past $P$ names independently given that $P'$ name. 

Now we have started the coupling. We have coupled the complete $P$ and $P'$ pasts and the $P'$ name of the future. What is left is to couple the complete future $P$ names (given the pasts and the $P'$ futures) so that they are close in mean hamming distance from time 0 to time $n$ for sufficiently large $n$. In fact we will do something stronger, we will get the future $P$ names to completely agree eventually. Before continuing the coupling, let us see what we already know.
\begin{align}
&\text{a)	We know the $P'$ names of the past.}\notag\\
&\text{b)	We know that the $P'$ names of the past are the same.}\notag\\
&\text{c)	We know the future $P'$ names.}\notag\\
&\text{d)	We know that the future $P'$ names are the same.}\notag\\
&\text{e)	We know the past $P$ names.}\notag
\end{align}

By a) and \coref{intuitive}, the first positive time either process lands in $S$ they will know their entire scenery and by b), c) and d) the first time the processes land in $S$ will be the same and the scenery they will both know at that time will be the same. Suppose that the first positive time they land in $S$ will be time 30. The \ttt  has been defined by starting with the path and the scenery at time 0 and use them to generate an element of the \ttpl. Hence from time 30 onward, since you know the scenery at time 30,  the future (after time 30) path will determine the $P$ and $P'$ name but given the scenery, the future path will determine the future $P'$ name INDEPENDENT OF THE PAST $P'$ NAME AND PAST $P$ NAME. Keep in mind that when one parameter determines another then knowledge about the latter parameter tells you information about the former parameter and nothing else, e.g. if your name determines whether or not you will be wealthy, then knowing that you will be wealthy determines information about your name and it does not determine anything else. So given the scenery, the future $P'$ name gives information about the future path after time 30 and about nothing else and the information it gives will be the same for both \omo and \omt because they see the same scenery. Hence we can continue the coupling by coupling independently until the first time the two processes reach $S$ (in our example until time 30) and then couple the two paths to be the same after that (which causes the $P$ names to be the same after that)
\end{proof}
\newpage


\begin{thebibliography}{9}
\bibitem{OS}Ornstein, Donald S.; Shields, Paul C. An uncountable family of $K$-automorphisms. {\em Advances in Math.} (10), 63--88. (1973). MR0382598 {\bf52:}3480
28A65


\bibitem{ttt}S. Kalikow. \ttt is not loosely Bernoulli, {\em Annals of Math.} (2) {\bf115} (1982), 154--160. MR {\bf84j:}28023

\bibitem{A}J Thouvenot. Quelques proprietes des systemes dynamiques qui se decomposent en 
un produit de deux systemes dont l'un est un schema de Bernoulli.
Israel Journal of Mathematics vol 21, 2-3 (1975)

\bibitem{again} J Thouvenot. Two facts concerning the transformations which satisfy the weak 
Pinsker property.
Ergodic Theory and Dynamical Systems 28 (2008)

\bibitem{hoff}Hoffman, Christopher.; The scenery factor of the $[T,T\sp {-1}]$ transformation is not loosely Bernoulli.{\em Proc. Amer. Math. Soc}. {\bf131} (2003),  no. 12, 3731--3735 (electronic). MR1998180 (2004j:60203)  60K37 (28D05 37A35)

\bibitem{mat}H.Matzinger. Reconstructing a 2 color scenery by observing it along a simple random walk path {\em ANN.Appl. Probab.} {\bf15}(2005) no. 1B, 778--815 (Reviewer: Fabio P. Machado) \underline{60K37(60G50)}

\bibitem{vwb} J Thouvenot. Remarque sur les systemes dynamiques donnes avec plusieurs facteurs
Israel Journal of Mathematics vol 21, 2-3 (1975)

\end{thebibliography}
\end{document}